\newtheorem{theo}{Theorem}
\newtheorem{obs}[theo]{Observation}
\newtheorem{lemma}[theo]{Lemma}
\newtheorem{defn}[theo]{Definition}
\numberwithin{theo}{section}
\author{}
\title{Applications of Sparse Hypergraph Colorings}
\date{\today}
\author{Felix Christian Clemen \\
Karlsruhe Institute of Technology, 76133 Karlsruhe, Germany}
\begin{document}
\maketitle
\begin{abstract}
Many problems in extremal combinatorics can be reduced to determining the independence number of a specific auxiliary hypergraph. We present two such problems, one from discrete geometry and one from hypergraph Tur\'an theory. Using results on hypergraph colorings by Cooper-Mubayi and Li-Postle, we demonstrate that for those two problems the trivial lower bound on the independence number can be improved upon:

\begin{itemize}
\item
Erd\H{o}s, Graham, Ruzsa and Taylor asked to determine the largest size, denoted by $g(n)$, of a subset $P$ of the grid $[n]^2$ such that every pair of points in $P$ span a different slope. Improving on a lower bound by Zhang from 1993, we show that $$g(n)=\Omega \left(  \frac{n^{2/3} (\log \log n)^{1/3} }{ \log^{1/3}n} \right).$$ 
\item
Let $H^r_3$ denote an $r$-graph with $r+1$ vertices and $3$ edges. Recently, Sidorenko proved the following lower bounds for the Tur\'an density of this $r$-graph: $\pi(H^r_3)\geq r^{-2}$ for every $r$, and $\pi(H^r_3)\geq (1.7215 - o(1)) r^{-2}$. We present an improved asymptotic bound: $$\pi(H^r_3)=\Omega\left(r^{-2}  \log^{1/2} r  \right).$$ 
\end{itemize}
\end{abstract}

\section{Introduction}
A \emph{hypergraph} is a pair $(V,E)$ where $V$ is a set whose elements are called \emph{vertices}, and $E$ is a family of subsets of $V$, called \emph{edges}. A hypergraph is a \emph{$k$-graph} if every edge has size $k$.  
A hypergraph has \emph{rank $k$} if every edge contains at least $2$ and at most $k$ vertices.

A subset of vertices $I\subseteq V$ is called an \emph{independent set} in $H=(V,E)$ if there is no edge $e\in E$ contained in $I$. The \emph{independence number} of a hypergraph $H$, denoted by $\alpha(H)$, is the size of a maximum independent set. Many problems in extremal combinatorics are related to the problem of determining the independence number of a specific auxiliary hypergraph, see e.g. \cite{ajtai, Campos,Fer, Furedi, Krivelevich}. In this paper we present two such problems, one from discrete geometry and one from hypergraph Tur\'an theory. For both we use probabilistic tools to present an improved bound on the independence number. 

\subsection{Application 1: The distinct slopes in the grid problem}
The unit distance problem, a classical problem in combinatorial geometry, posed by Erd\H{o}s\cite{MR15796} in 1946, asks to determine the maximum number of unit segments in a set of $n$ points in the plane. Since then a great variety of extremal problems in finite planar point sets have been studied. Here, we look at such a question concerning the \emph{grid} $[n]^2$. 

A subset $P\subseteq [n]^2$ of the grid has \emph{distinct slopes} if there are no two pairs of points spanning lines with the same slope. Note that such a set $P$ does not contain collinear triples, nor 4 points forming a trapezoid. Denote by $g(n)$ the size of the largest subset $P$ of the grid $[n]^2$ with distinct slopes. In 1992, Erd\H{o}s, Graham, Ruzsa and Taylor~\cite{EGRT} asked to determine this function and proved 
$$  \Omega(n^{1/2})=g(n)=O(n^{4/5}).$$
Their lower bound comes from an algebraic construction. In 1993, Zhang~\cite{zhang} improved the lower bound by a deletion method argument resulting in $$g(n)
=\Omega\left(\frac{n^{2/3}}{\log^{1/3}n}\right).$$
To do so, Zhang~\cite{zhang} observed that the number $T$ of trapezoids in $[n]^2$ is $O(n^6 \log n)$ and the number $C$ of collinear triples is $O(n^4\log n)$. Therefore, when considering a $p$-random subset of the grid, for $p=c n^{-4/3}\log^{-1/3} n$, where $c$ is sufficiently small it holds that
$$pn^2-p^4T-p^3C=\Omega\left(\frac{n^{2/3}}{\log^{1/3}n}\right).$$
Thus, there exists a subset $P\subseteq [n]^2$ of the desired size with no trapezoids and no collinear triples, and therefore $P$ has distinct slopes. Here, we present the first improvement to this deletion method argument in 30 years.
\begin{theo}
\label{pointmain}
There exists a subset of the grid $[n]^2$ with distinct slopes and of size
$$g(n)=\Omega \left(  \frac{n^{2/3} (\log \log n)^{1/3} }{ \log^{1/3}n} \right).$$

\end{theo}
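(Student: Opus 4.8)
The plan is to bound $g(n)$ from below by the independence number of a $4$-uniform \emph{trapezoid hypergraph} built on a carefully chosen subset of $[n]^2$, and to apply the sparse hypergraph colouring theorems of Cooper--Mubayi and Li--Postle in place of the elementary deletion bound used by Zhang. Recall that for a $4$-uniform hypergraph $\mathcal H$ on $N$ vertices with maximum degree $\Delta$ which is sufficiently \emph{uncrowded} (only $o(N)$ pairs of edges meet in two or more vertices), those results yield an independent set of size $\Omega\bigl(N(\log\Delta/\Delta)^{1/3}\bigr)$. For the trapezoid hypergraph the ratio $N/\Delta^{1/3}$ always equals $\Theta(n^{2/3}/\log^{1/3}n)$ --- this is exactly Zhang's bound --- so an improvement must come from choosing the construction so that $\Delta$ is a growing polylogarithm, whence $\log\Delta=\Theta(\log\log n)$ and the extra factor $(\log\Delta)^{1/3}=(\log\log n)^{1/3}$ appears. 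The obstruction is crowdedness: on a plain random subset of $[n]^2$, two grid points lying on a common line of small-denominator slope are contained in huge numbers of common trapezoids. The construction therefore begins by destroying such pairs.

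\emph{Step 1 (destroying shallow slopes).} Call the slope of a pair of grid points \emph{shallow} if, in lowest terms $p/q$ (allowing $1/0$ and $0/1$), one has $\max(|p|,|q|)\le M_0:=n^{1/3}(\log n)^{1/6}$; the value of the exponent is not important, only that $M_0=n^{1/3}(\log n)^{\Theta(1)}$. An elementary count gives $\Theta(n^3M_0)$ shallow pairs in $[n]^2$, each point lying in $\Theta(nM_0)$ of them. Let $R\subseteq[n]^2$ include each point independently with probability $p:=c\,n^{-4/3}(\log n)^{-1/6}$, where $c>0$ is a small constant. Then $\mathbb E|R|=c\,n^{2/3}(\log n)^{-1/6}$, the expected number of shallow pairs inside $R$ is $\Theta(p^2 n^3 M_0)=\Theta(c)\cdot\mathbb E|R|$, and (as computed in Step 2) the expected numbers of collinear triples and of non-shallow $2$-cycles inside $R$ are $o(\mathbb E|R|)$. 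Fixing an outcome in which $|R|=\Theta(n^{2/3}(\log n)^{-1/6})$ and all of these counts are $O$ of their expectations, delete one point from each shallow pair, each collinear triple, and each non-shallow $2$-cycle of $R$, and then a $\tfrac{1}{10}$ fraction of the remaining vertices of largest non-shallow trapezoid-degree; for $c$ small this leaves a set
\[
S\subseteq[n]^2,\qquad |S|=\Omega\bigl(n^{2/3}(\log n)^{-1/6}\bigr),
\]
with no shallow pair, no collinear triple, and in which the $4$-uniform hypergraph below is both linear and of bounded maximum degree.

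\emph{Step 2 (the trapezoid hypergraph).} Let $\mathcal H$ be the $4$-uniform hypergraph on $S$ whose edges are the quadruples $\{A,B,C,D\}$ with $AB\parallel CD$. Since $S$ has no shallow pair, every parallel pair occurring in $\mathcal H$ has non-shallow slope, and since $S$ has no collinear triple, an independent set of $\mathcal H$ is exactly a subset of $S$ with distinct slopes. Writing $m_s=\Theta\bigl(n^3/\max(|p|,|q|)\bigr)$ for the number of pairs of grid points of slope $s=p/q$, Zhang's count $\sum_s m_s^2=\Theta(n^6\log n)$ together with the fact that the non-shallow slopes still contribute a constant fraction of this sum give that $\mathcal H$ has $\Theta(p^4 n^6\log n)$ edges, hence average (and, after Step 1, maximum) degree
\[
\Delta=\Theta\bigl(p^3 n^4\log n\bigr)=\Theta\bigl((\log n)^{1/2}\bigr).
\]
The number of $2$-cycles in (the non-shallow part of) $\mathcal H$ is $o(|S|)$: the generic contributions are governed by sums such as $p^6\sum_{M>M_0}M\cdot(n^3/M)^3=\Theta(p^6 n^9/M_0)$ and $p^6 n^8(\log n)^{O(1)}$, each of which is $o(|S|)$ for the above choice of $p$ and $M_0$, while the degenerate contributions (two trapezoids sharing three vertices via the same parallel pair) force a collinear triple and so are absent in $S$. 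Thus $\mathcal H$ is uncrowded with $\Delta=(\log n)^{\Theta(1)}$, and the Cooper--Mubayi and Li--Postle colouring theorems apply.

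\emph{Step 3 (conclusion).} The colouring theorems give $\alpha(\mathcal H)=\Omega\bigl(|S|(\log\Delta/\Delta)^{1/3}\bigr)$. Since $|S|=\Theta(n^{2/3}(\log n)^{-1/6})$ and $\Delta=\Theta((\log n)^{1/2})$, so $\log\Delta=\Theta(\log\log n)$, the logarithmic powers cancel:
\[
\alpha(\mathcal H)=\Omega\!\left(n^{2/3}(\log n)^{-1/6}\cdot\frac{(\log\log n)^{1/3}}{(\log n)^{1/6}}\right)=\Omega\!\left(\frac{n^{2/3}(\log\log n)^{1/3}}{\log^{1/3}n}\right),
\]
and a set realising $\alpha(\mathcal H)$ is a subset of $[n]^2$ with distinct slopes, proving Theorem~\ref{pointmain}. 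The conceptual heart is the Step~1 reduction: the random construction must first be stripped of every pair of small-denominator slope, and the threshold $M_0$ being of order $n^{1/3}$ is essentially forced --- smaller and $\mathcal H$ remains crowded, larger and $\Delta$ is bounded, killing the gain. The technical heart is the bookkeeping in Step~2: verifying that, for $M_0$ and $p$ as chosen, all families of $2$-cycles in the non-shallow trapezoid hypergraph have size $o(|S|)$ and that the maximum degree is $\Theta((\log n)^{1/2})$, which rests on the slope count $\sum_s m_s^2=\Theta(n^6\log n)$ restricted to non-shallow slopes and the elementary geometry of which $4$- and $5$-point configurations can occur.
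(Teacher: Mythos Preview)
Your approach and the paper's share the crucial insight --- the $n^{1/3}$ threshold separating ``shallow'' from ``steep'' slopes --- but the implementations differ substantially. The paper works directly on the full grid $[n]^2$: it builds a \emph{rank-$4$} hypergraph $\mathcal H_{s^*}$ whose $2$-edges are the shallow pairs, $3$-edges the collinear triples, and $4$-edges the non-shallow trapezoids, and then applies the Li--Postle theorem (which handles mixed-rank hypergraphs) in a single stroke. No random sampling and no deletion bookkeeping are needed; the codegree hypotheses are verified by direct grid counts (the paper's Lemma~\ref{grid}), and one reads off $\chi(\mathcal H_{s^*})=O\bigl(n^{4/3}\log^{1/3}n/(\log\log n)^{1/3}\bigr)$.

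Your route instead samples at density $p\approx n^{-4/3}(\log n)^{-1/6}$, deletes shallow pairs, collinear triples, and $2$-cycles to produce a $4$-uniform linear hypergraph with polylogarithmic maximum degree, and only then invokes the uniform Cooper--Mubayi theorem. This is a legitimate alternative and has the merit of needing only the $k$-uniform result rather than the rank-$k$ generalisation; it is also closer in spirit to the uncrowded-hypergraph method.

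That said, your Step~2 bookkeeping is incomplete as written. The assertion that two trapezoids sharing three vertices ``via the same parallel pair'' force a collinear triple covers only some of the $|e_1\cap e_2|=3$ sub-cases: for instance $\{A,B,C,D\}$ with $AB\parallel CD$ and $\{A,B,C,E\}$ with $AC\parallel BE$ forces no collinearity and must be counted separately (it contributes $O(p^{5}\,n^{7}\log n/M_0)=O(1)$, so it is harmless, but this needs to be said). Similarly, the $|e_1\cap e_2|=2$ case splits into several configurations according to whether the shared pair is a parallel side in neither, one, or both trapezoids; you quote two representative sums but a complete proof must treat all of them. Finally, ``delete the top $1/10$ by degree'' to pass from average to maximum degree is correct but deserves a line of justification. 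None of these gaps is fatal --- the relevant counts do all come out $o(|S|)$ --- but the paper's rank-$4$ formulation avoids this case analysis entirely, which is what it buys.
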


\subsection{\texorpdfstring{Application 2: The Tur\'an density of the $r$-graph with $3$ edges on $r+1$ vertices}{TEXT}}

Given an $r$-graph $H$, the \emph{Tur\'an function} $\textup{ex}(n,H)$ is the maximum number of edges in an $H$-free $n$-vertex $r$-graph. The \emph{Tur\'an density} of $H$, denote by $\pi(H)$, is 
 $$
 \pi(H)=\lim_{n\to \infty} \frac{\textup{ex}(n,H)}{ \binom{n}{r}}.$$
 It is a central question in extremal combinatorics to determine $\pi(H)$ for non $r$-partite $r$-graphs when $r\geq 3$. The Tur\'an density of any $r$-graph with at most 2 edges is $0$. A natural problem is to consider $r$-graphs with $3$ edges. Denote by $H^r_3$ an $r$-graph with 3 edges on $r+1$ vertices. Note that all such $r$-graphs are isomorphic. 
 
 It can be observed quickly that $\pi(H^r_3) \rightarrow 0$, see e.g. \cite{BLM} by Bollob\'as, Leader and Malvenuto. Specifically, $r!/r^r\leq \pi(H^r_3)\leq 2/(r+1)$. The lower bound holds, because $H^r_3$ is not $r$-partite. The upper bound holds because in an $H^r_3$-free $r$-graph every $r+1$ vertices span at most $2$ edges. Frankl and F\"uredi~\cite{FF} proved  $\pi(H^r_3)\geq 2^{1-r}$ using a geometric construction. Recently, this result was improved by Sidorenko~\cite{sid} to $\pi(H^r_3)\geq r^{-2}$ for all $r$ and $\pi(H^r_3)\geq (1.7155-o(1))r^{-2}$ for $r \rightarrow \infty$. The best general upper bound is $\pi(H^r_3)\leq 1/r$ by Gunderson and Semerano~\cite{gund2} who used a double counting argument. See \cite{gunderson2022tur} for more precise bounds on $\pi(H^r_3)$ for small values of $r$. We present an asymptotic improvement on the lower bound. 
\begin{theo}
\label{greedyK3}
$\pi(H^r_3)=\Omega\left(\frac{\sqrt{\log r}}{r^2}\right)$.
\end{theo}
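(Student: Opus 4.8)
The plan is to realize $\ex(n,H^r_3)$ as the independence number of an explicit auxiliary $3$-uniform hypergraph, apply one of the sparse hypergraph colouring bounds quoted above, and transfer the resulting estimate for a finite configuration to the Tur\'an density by a blow-up. Concretely, for a ground set $[n]$ let $\cH_n$ be the $3$-uniform hypergraph whose vertices are the $r$-subsets of $[n]$ and whose edges are all triples $\{A_1,A_2,A_3\}$ of $r$-subsets lying inside a common $(r+1)$-set. A family $S$ of $r+1$ vertices spans at most two edges of an $r$-graph exactly when no three of the $r+1$ $r$-subsets of $S$ are present, so an $r$-graph on $[n]$ is $H^r_3$-free if and only if its edge set is an independent set in $\cH_n$; hence $\ex(n,H^r_3)=\alpha(\cH_n)$.

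Next I would specialise to $n=r^2$ and record the parameters of $\cH:=\cH_{r^2}$. It has $N=\binom{r^2}{r}$ vertices and is $D$-regular with $D=(r^2-r)\binom{r}{2}=\tfrac12 r^2(r-1)^2=\Theta(r^4)$: a fixed $r$-set $A$ lies in $r^2-r$ sets of size $r+1$, each contributing $\binom r2$ edges through $A$, and these edges are distinct because a triple $\{A,B,C\}$ determines its $(r+1)$-set $A\cup B\cup C$. Moreover a pair $\{A,B\}$ of vertices is contained in an edge only if $|A\cap B|=r-1$, in which case $A\cup B$ is the unique $(r+1)$-set through both and there are exactly $r-1$ edges on $\{A,B\}$; so the maximum codegree of $\cH$ is $r-1=O(D^{1/4})$. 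Thus $\cH$ has codegree small enough for the Cooper--Mubayi / Li--Postle bound to apply, giving $\chi(\cH)=O\!\big(\sqrt{D/\log D}\big)$ and therefore
$$\alpha(\cH)\ \geq\ \frac{N}{\chi(\cH)}\ =\ \Omega\!\left(\frac{N\sqrt{\log D}}{\sqrt D}\right)\ =\ \Omega\!\left(\binom{r^2}{r}\,\frac{\sqrt{\log r}}{r^2}\right).$$

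Finally I would pass from this finite statement to $\pi(H^r_3)$ by a blow-up. A short case analysis on the number of distinct ``coordinates'' among any $r+1$ chosen vertices shows that if $G$ is $H^r_3$-free then so is every blow-up $G[t]$, which has $|V(G)|\,t$ vertices and $|E(G)|\,t^r$ edges. Applying this to an $H^r_3$-free $r$-graph $G$ on $r^2$ vertices with $|E(G)|=\alpha(\cH)$ edges yields $\ex(r^2t,H^r_3)\geq |E(G)|\,t^r$ for all $t$, and since the limit defining $\pi$ exists,
$$\pi(H^r_3)\ \geq\ \lim_{t\to\infty}\frac{|E(G)|\,t^r}{\binom{r^2 t}{r}}\ =\ \frac{r!\,|E(G)|}{(r^2)^r}\ =\ \left(\prod_{i=0}^{r-1}\Bigl(1-\tfrac{i}{r^2}\Bigr)\right)\frac{|E(G)|}{\binom{r^2}{r}}\ \geq\ \tfrac12\cdot\Omega\!\left(\frac{\sqrt{\log r}}{r^2}\right),$$
using $\prod_{i=0}^{r-1}(1-i/r^2)\geq 1-\tfrac{r(r-1)}{2r^2}>\tfrac12$. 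This is the claimed bound $\pi(H^r_3)=\Omega(\sqrt{\log r}/r^2)$.

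I expect the delicate points to be the choice $n=r^2$ and the verification that $\cH$ falls within the hypotheses of the quoted colouring results. The order $n\asymp r^2$ is essentially forced: it is the smallest scale at which the factor $\prod_{i<r}(1-i/n)$ surviving the blow-up stays bounded away from $0$, whereas taking $n$ larger only weakens $\sqrt{\log D}/\sqrt D=\Theta\!\big(\sqrt{\log n}/(r\sqrt n)\big)$, and taking $n$ smaller makes that product exponentially small. With this choice the codegree of $\cH$ is \emph{polynomially} (not boundedly) related to its degree, namely $r-1\asymp D^{1/4}$, so one must invoke a version of the Cooper--Mubayi / Li--Postle theorem valid for hypergraphs of codegree up to roughly $D^{1/2-\eps}$ rather than only for linear or bounded-codegree hypergraphs; confirming applicability in this regime is the crux. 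The remaining ingredients — the identity $\ex(n,H^r_3)=\alpha(\cH_n)$, the degree/codegree computation, and the blow-up lemma for $H^r_3$-freeness — are routine.
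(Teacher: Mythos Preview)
Your proof is correct and follows essentially the same approach as the paper: define the auxiliary $3$-graph on $\binom{[r^2]}{r}$, compute $\Delta=\binom{r}{2}(r^2-r)=\Theta(r^4)$ and $\Delta_{2,3}=r-1$, apply Cooper--Mubayi (Theorem~\ref{Mub}) to get $\chi=O(r^2/\sqrt{\log r})$, and then blow up. Your worry about the codegree hypothesis is unfounded: Theorem~\ref{Mub} only asks that $\Delta_{2,3}\le \Delta^{1/2}/f$, and since $r-1\le \Delta^{1/2-1/10}$ one may simply take $f=\Delta^{1/10}$ (as the paper does), giving $\log f=\Theta(\log r)$ directly.
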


\subsection{Sparse hypergraph coloring}
In this subsection we present the tools we are using to prove Theorems~\ref{pointmain} and \ref{greedyK3}.

 A \emph{proper coloring} of a hypergraph $H$ is an assignment of colors to the vertices so that no edge is monochromatic. The smallest number of colors such that a proper coloring of $H$ with this many colors exists, is called the \emph{chromatic number} of $H$ and denoted by $\chi(H)$.

Let $H$ be a rank $k$ hypergraph. For an integer $2 \leq l \leq k$, and a set $S$ of vertices, $1 \leq |S| < \ell$, we define $\deg_\ell(S,H)$ to be the number of size $\ell$ edges containing the set $S$. When $H$ is clear from context, we omit $H$ and write $\deg_\ell(S)$ instead. The maximum $l$-degree of $H$, denoted by $\Delta_\ell(H)$, is the maximum of $\deg_{\ell}(\{v\},H)$ over all vertices $v\in V(H)$. The maximum $(s,\ell)$-codegree of $H$, denoted by $\Delta_{s,\ell}(H)$, is the maximum of $\deg_{\ell}(S,H)$ over all vertex sets $S$ of size $s$ in $H$. For a $k$-graph, we write $\Delta(H)$ for the \emph{maximum degree} $\Delta_{1,k}(H)$. Again, when $H$ is clear from context, we omit $H$ and simply write $\Delta_{\ell}$, $\Delta_{s,\ell}$ and $\Delta$ respectively.

An elementary greedy coloring algorithm shows that any graph $G$ has chromatic number $\chi(G) \leq \Delta_2 + 1$. In 1996, Johansson~\cite{johansson} proved that $\chi(G) = O(\Delta_2/ \log \Delta_2)$ for any triangle-free graph $G$. Corresponding bounds on the independence number were established by Ajtai- Koml{\'o}s-Szemer{\'e}di~\cite{ajtai} and Shearer~\cite{shearer}. Frieze and Mubayi~\cite{FM}
generalized Johansson's theorem to all $3$-uniform linear hypergraphs, and
Cooper and Mubayi~\cite{CM} later generalized it to all
rank 3 hypergraphs. The following result by Cooper and Mubayi~\cite{MJ} gives an upper bound on the chromatic number $\chi(G)$  of a $k$-graph under a codegree condition. 

\begin{theo}[Cooper, Mubayi~\cite{MJ}]
\label{Mub}
Fix $k \geq 3$. Let $H$ be a $k$-graph. If
\begin{align}
\label{condMub}
\Delta_{\ell,k}(H) \leq \Delta^{\frac{k-\ell}{k-1}}/f \quad \quad \text{for} \ \ell=2,\ldots,k-1,
\end{align}
then
$$\chi(H)=O\left(\left(\frac{\Delta}{\log f}\right)^{1/(k-1)} \right).$$
\end{theo}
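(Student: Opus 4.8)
The plan is to prove the (formally stronger) list-coloring statement and then specialize. Concretely: there is a constant $C=C(k)$ such that if each vertex of $H$ is assigned a list of at least $q:=C(\Delta/\log f)^{1/(k-1)}$ colors, then $H$ admits a proper list coloring; taking all lists equal to $\{1,\dots,\lceil q\rceil\}$ yields the theorem. Note the bound must improve on the plain Lov\'asz Local Lemma bound $\chi(H)=O(\Delta^{1/(k-1)})$ by a factor $(\log f)^{1/(k-1)}$, so the codegree hypothesis has to be exploited genuinely, not just through the maximum degree. The engine is the semi-random (``nibble'') coloring process, run for $\Theta(\log f)$ rounds. In round $t$, each still-uncolored vertex $v$ activates with probability $1/2$ and, if activated, picks a color $c_v$ uniformly at random from its current list $L_v$; the choice $c_v$ becomes permanent unless it would complete a monochromatic edge --- that is, unless some edge $e\ni v$ has every vertex of $e\setminus\{v\}$ either already colored $c_v$ or tentatively colored $c_v$ in this round --- and, after the round, for every uncolored $u$ and color $c$ for which some edge $e\ni u$ has $e\setminus\{u\}$ entirely colored $c$, we delete $c$ from $L_u$. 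The reason $k\ge3$ matters is that an edge only becomes dangerous once $k-1$ of its vertices share a color, and the bounds on $\Delta_{\ell,k}(H)$ are exactly what quantifies how slowly such near-monochromatic configurations can build up.

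The heart of the argument is a system of invariants carried across the rounds. For a vertex $v$ write $\ell_v=|L_v|$, and for a color $c\in L_v$ and each $2\le j\le k-1$ let $\tau_j(v,c)$ be a suitably weighted count of ``$j$-threats at $(v,c)$'': tuples of $j$ distinct uncolored vertices $u_1,\dots,u_j$, each with $c\in L_{u_i}$, such that together with some $k-1-j$ vertices already colored $c$ they complete an edge through $v$. The claim to push by induction on $t$ is that after round $t$ every vertex satisfies $\ell_v\ge L_t$ and every pair satisfies $\tau_j(v,c)\le W_{j,t}$, for explicit sequences $L_t, W_{j,t}$ along which each ratio $W_{j,t}/L_t^{\,j}$ decreases by a constant factor per round. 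The hypothesis $\Delta_{\ell,k}(H)\le\Delta^{(k-\ell)/(k-1)}/f$ enters precisely here: it forces the initial threat densities, and hence (via the recursions) the later ones, to be a factor of roughly $f$ below the trivial bound, and that is the slack the process spends over its $\Theta(\log f)$ rounds. The one-round expected behavior is then routine bookkeeping: a tentative color is retained at $v$ with probability bounded below by a positive constant (because $\sum_j\tau_j(v,c)\ll\ell_v$, so permanent collisions on an edge are rare), so list sizes contract by a bounded factor; and $\tau_j$ is multiplied by a controlled factor capturing the chance that the one extra $c$-colored vertex needed to turn a $(j{+}1)$-threat into a $j$-threat actually receives color $c$.

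The main obstacle is concentration. Each of $\ell_v$ and $\tau_j(v,c)$ is a function of one round's random activations and color choices, and we need it to stay within, say, a $(1\pm t^{-2})$ factor of its conditional expectation with failure probability small enough to survive a union bound over the $O(n)$ vertices, $q$ colors, the $k-2$ values of $j$, and the $\Theta(\log f)$ rounds --- or, where that union bound is too lossy, one phrases the deviation events as the bad events of a Lov\'asz Local Lemma across rounds. The list size $\ell_v$ is the delicate case: it is not Lipschitz in a usefully additive way, since one altered color choice elsewhere can cascade into several deletions, so one needs Talagrand-type concentration together with the observation that the ``certifiable'' portion of the drop dominates, or a bounded-differences martingale obtained after conditioning away a low-probability exceptional event. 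The threat counts, being counts of $O(1)$-sized configurations, yield to a bounded-differences (Azuma) estimate once one checks that changing a single vertex's choice perturbs only $\mathrm{poly}(k)\cdot\Delta^{(k-1-j)/(k-1)}$ of them. Keeping the $k-2$ codegree parameters and their threat families coupled through the recursions --- a $j$-threat of $v$ morphs into a $(j{-}1)$-threat as one of the $u_i$ gets colored --- is the bookkeeping that must be executed carefully. Once the invariants survive all $\Theta(\log f)$ rounds, the residual uncolored hypergraph has, at every vertex, list size $\ell_v\ge L_T$ exceeding the number of edges through $v$ that could still become monochromatic, and a final Lov\'asz Local Lemma application finishes the coloring; collecting the constants gives $\chi(H)=O((\Delta/\log f)^{1/(k-1)})$. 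An approach I would keep in reserve is a Molloy-style direct local-lemma argument, as in modern proofs of Johansson's theorem, which trades the concentration machinery for a more elaborate counting of ``witness'' histories but accommodates the codegree hypotheses in the same fashion.
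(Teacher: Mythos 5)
This theorem is cited by the paper from Cooper and Mubayi~\cite{MJ} and is used entirely as a black box; the paper contains no proof of it, so there is no in-paper proof against which to compare your attempt. For what it is worth, your outline does track the broad strategy of the actual Cooper--Mubayi argument: a semi-random (nibble) coloring run for $\Theta(\log f)$ rounds, invariants on list sizes and weighted ``threat'' counts propagated across rounds using the codegree hypotheses, Talagrand-type or martingale concentration to control the one-round updates, and a closing Lov\'asz Local Lemma step once the residual hypergraph is sparse enough; your remark that $k\ge 3$ matters because an edge only becomes dangerous after $k-1$ of its vertices share a color is also the right intuition. But what you have written is an architectural sketch, not a proof. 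The two items you set aside as ``routine bookkeeping'' (the invariant recursions for $L_t$ and $W_{j,t}$, and in particular verifying that the codegree hypothesis really does buy the factor-$f$ slack that survives $\Theta(\log f)$ rounds of multiplicative loss) and as ``the main obstacle'' (concentration of $\ell_v$ and $\tau_j(v,c)$ with small enough failure probability) are precisely where the substance of the Cooper--Mubayi paper lies, and neither is carried out here; as it stands your proposal explains why such a proof should exist rather than supplying one.
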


Recently, Li and Postle~\cite{linali} generalized Theorem~\ref{Mub} to hold for rank $k$ hypergraphs. Given two hypergraphs $F_1$ and $F_2$, a map $\Phi : V(F_1) \rightarrow V(F_2)$ is an \emph{isomorphism} if for all subsets
$E \subseteq V(F_1)$, we have $\Phi(E) \in F_2$ if and only if $E \in F_1$. If there exists an isomorphism $\Phi : V(F_1) \rightarrow V(F_2)$,
we say $F_1$ is isomorphic to $F_2$ and denoted it by $F_1 \cong_\Phi F_2$. For two hypergraphs $F, H$ and a vertex
$v \in V(F)$, let
$$\Delta_{F,v}(H) = \max_{u\in V (H)}
|{F' \subseteq H : F' \cong_\Phi F \text{ and } \Phi(u) = v}|$$
and
$$\Delta_F (H) = \min_{v\in V(F)} \Delta_{F,v}(H).$$

\begin{theo}[Li, Postle~\cite{linali}]
\label{LPspar}
Fix $k\geq 3$. Let $\mathcal{H}$ be a rank $k$ hypergraph.
Suppose that for all $2\leq s<\ell \leq k$, 
\begin{align}
\Delta_{s, \ell}(\mathcal{H})\leq \left(\max_{2\leq\ell\leq k}\Delta_{\ell}(\mathcal{H})^{1/(\ell-1)}\right)^{\ell-s}/f, 
\label{codegreecon}
\end{align}
and additionally for the graph triangle $K_3$,
\begin{align}
\Delta_{K_3}(\mathcal{H})\leq \left(\max_{2\leq\ell\leq k}\Delta_{\ell}(\mathcal{H})^{1/(\ell-1)}\right)^{2}/f.
\label{codegreeconK3}
\end{align}
Then we have
\[
\chi(\mathcal{H})\leq O\left(\max_{2\leq \ell \leq k} \left\{\left(  \frac{\Delta_{\ell}(\mathcal{H})}{\log f} \right)^{\frac{1}{\ell-1}} \right\}\right).
\]
\end{theo}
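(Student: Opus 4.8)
The plan is to prove the (formally stronger) list-coloring version of the statement by an iterated semi-random coloring procedure --- a hypergraph analogue of the ``wasteful coloring''/R\"{o}dl nibble argument of Johansson and Molloy for graphs and of Frieze--Mubayi and Cooper--Mubayi for hypergraphs --- and then to finish the coloring with one application of the (asymmetric) Lov\'asz Local Lemma. Set $D := \max_{2 \le \ell \le k} \Delta_\ell(\mathcal{H})^{1/(\ell-1)}$, so that hypotheses \eqref{codegreecon} and \eqref{codegreeconK3} become $\Delta_{s,\ell}(\mathcal{H}) \le D^{\ell - s}/f$ and $\Delta_{K_3}(\mathcal{H}) \le D^2/f$, and let $C := K \cdot \max_{\ell}(\Delta_\ell(\mathcal{H})/\log f)^{1/(\ell-1)}$ for a large constant $K = K(k)$ be the target number of colors; equivalently $\Delta_\ell(\mathcal{H}) \le K^{1-\ell} C^{\ell-1}\log f$ for every $\ell$. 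We may assume $f$ is at least a large constant, since otherwise $\log f = \Theta(1)$ and the bound $\chi(\mathcal{H}) = O(D)$ --- which follows from a routine application of the asymmetric Local Lemma to a uniformly random coloring with $\Theta(D)$ colors --- already gives the conclusion. Concretely, we will properly color $\mathcal{H}$ with each vertex $v$ receiving a color from an arbitrary prescribed list $L(v)$ of size $C$.

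First I would set up the iteration. For each still-uncolored vertex $v$ we maintain a palette $L_i(v)$ and, for each $2 \le \ell \le k$, a ``relevant $\ell$-degree'' $t_{i,\ell}(v)$ counting the $\ell$-edges through $v$ that still threaten to become monochromatic (all of their vertices uncolored, a common color still present in all of their palettes). One round proceeds as follows: every uncolored vertex activates and proposes a uniformly random color from its current palette; a proposed color is retracted --- the vertex staying uncolored and the color staying in its palette --- if it completes a monochromatic edge with already-colored vertices, and, as the crucial ``equalizing'' device, even a surviving proposal is made permanent only with a fixed probability $\beta$. Then the newly colored vertices are removed, and from each surviving palette one deletes every color that a now almost-monochromatic edge forces out. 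The parameters $\beta$ and $C$ are chosen so that in expectation each relevant degree $t_{i,\ell}(v)$ shrinks by a constant factor per round while each palette $L_i(v)$ shrinks only by a $1 - o(1)$ factor; hence after $O(\log f)$ rounds all relevant $\ell$-degrees have dropped below the threshold $\Theta(C^{\ell-1})$ needed at the end, while the palettes still have size $\Theta(C)$. At that point a monochromatic $\ell$-edge under a uniformly random coloring from the surviving palettes has probability $\le \Theta(C^{1-\ell})$ and bounded dependency, so the asymmetric Local Lemma completes the proper coloring.

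The heart of the matter --- and where both sparsity hypotheses enter --- is the concentration step: showing that $L_i(v)$ and $t_{i,\ell}(v)$ stay within a $(1 \pm o(1))$ factor of their one-round conditional expectations. To estimate the probability that a fixed color $c$ survives in $v$'s palette, one inclusion--excludes over the edges through $v$ that could delete $c$; the higher-order terms of that expansion are governed precisely by the codegrees $\Delta_{s,\ell}(\mathcal{H})$, and condition \eqref{codegreecon} is exactly what makes them lower-order, so the survival probability is $1 - o(1)$ and the palettes stay large. For the concentration itself --- exposing the proposals in a suitable order and applying a bounded-differences or Talagrand-type inequality, and handling the rare deviating vertices by a further ``local'' application of the Local Lemma carried through the whole iteration in the style of the Molloy--Reed semi-random method --- one needs the conflict structure around each vertex not to be too clustered: pairs of $2$-edges sharing a vertex with $v$ produce correlations indexed by triangles of the graph of $2$-edges, and condition \eqref{codegreeconK3} (local triangle-sparseness of that graph, which plays the role that triangle-freeness plays in Johansson's theorem) is what controls them. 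Carrying out this concentration analysis uniformly across all edge sizes $2 \le \ell \le k$ at once, with a single potential function reconciling the $\log f$ ``Johansson gain'' extracted from the $2$-edges with the different nibble behavior of the larger edges, is the step I expect to be the main obstacle; by comparison, the bookkeeping of the shrink rates and the concluding Local Lemma computation are routine.
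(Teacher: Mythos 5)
The paper does not prove Theorem~\ref{LPspar} at all --- it is cited as a black box from Li and Postle~\cite{linali}, and the text explicitly remarks that Li and Postle derive it from a more general statement (their Theorem~1.7). So there is no in-paper argument for your sketch to be measured against; the only proofs in this paper are of the two application theorems, which merely \emph{apply} Theorem~\ref{LPspar} and Theorem~\ref{Mub} as finished tools.

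Judged on its own terms, your outline is a plausible blueprint for a Johansson/Molloy--Reed/Frieze--Mubayi/Cooper--Mubayi style semi-random nibble: maintain palettes and ``relevant degrees'' for each edge size, shrink the relevant degrees geometrically while each palette loses only a $1-o(1)$ fraction per round, and finish with the asymmetric Local Lemma once all degrees fall below $\Theta(C^{\ell-1})$; your reading of where the two hypotheses enter (codegree bounds to make the inclusion--exclusion for palette survival converge, triangle sparsity to decorrelate the $2$-edges, in analogy with triangle-freeness in Johansson's theorem) is directionally sensible. But, as you yourself flag, the sketch leaves the genuinely hard part open: running the concentration argument uniformly across edge sizes $2\le\ell\le k$, threading the Local Lemma event through all $O(\log f)$ rounds, and reconciling the $\log f$ ``Johansson gain'' from $2$-edges with the nibble-style shrinkage of larger edges via a single potential function. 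That reconciliation is exactly what makes the non-uniform rank-$k$ case harder than the $k$-uniform case (Theorem~\ref{Mub}), and it is not carried out here. This is a roadmap rather than a proof, and I cannot certify from the sketch that it closes --- nor can I compare it to a paper proof, since there isn't one.
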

We point out that Li and Postle conclude Theorem~\ref{LPspar} from a more general result (see Theorem~1.7 in \cite{linali}).

This paper is organized as follows. In Section~\ref{sec: grid} we first collect several observations about the grid, and then we prove Theorem~\ref{pointmain} using Theorem~\ref{LPspar}. Finally, in Section~\ref{Turan}, we present the proof of Theorem~\ref{greedyK3} using Theorem~\ref{Mub}.

\section{Application 1: The distinct slopes in the grid problem}
\label{sec: grid}
In this section we prove Theorem~\ref{pointmain}. To do so, we first collect several facts about the grid.
\subsection{Observations about the grid}
The elements from the grid $[n]^2$ are referred to as \emph{grid points}. The slope of a line $\ell$ in $\mathbb{R}^2$ can be written in the 
form $\frac{u(\ell)}{v(\ell)}$, where $\gcd(|u(\ell)|,|v(\ell)|) = 1$ and $v(\ell) > 0$. Define 
$$ H(l):= \max\{|u(\ell)|,|v(\ell)|\}. $$
The number of grid points on a line with $H(\ell)=s$ is at most $O(n/s)$. The \emph{origin} in the grid is the point $o=(1,1)$. Two grid points $(x,y)$, $(x',y')$ are \emph{mutually visible} if the line segment joining them contains no further grid point. A grid point $(x,y)$ is called \emph{visible} if $(x,y)$ and the origin are mutually visible. A classical result in the theory of visible lattice points says that the number of visible grid points is $\frac{6}{\pi^2} n^2 (1+o(1))$, see e.g. \cite{herzog,Rademacher}. We will use the following crude estimate on the number of visible lattice in a specific subset of the grid repeatedly. For a positive integer $i$, let $V_i$ be the set of visible points in $[2^i]^2\setminus [2^{i-1}]^2$. 
\begin{obs}
\label{vis}
For every $i$ sufficiently large, $|V_i|\geq \frac{1}{4} 2^{2i}$.
\end{obs}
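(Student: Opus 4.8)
The plan is to count visible points in $[2^i]^2 \setminus [2^{i-1}]^2$ by an inclusion–exclusion (Legendre sieve) over primes $p$, exactly as in the classical proof that the density of visible lattice points is $6/\pi^2$, but keeping explicit error terms so that the conclusion holds for the annular region rather than the full square. First I would observe that a point $(x,y)$ with $1 \le x,y$ is visible (from the origin $o=(1,1)$) precisely when $\gcd(x-1,y-1)=1$; to keep indices clean I would instead work with the shifted coordinates, so that the task becomes: lower-bound the number of pairs $(a,b)$ with $2^{i-1}\le \max(a,b) < 2^i$ (roughly) and $\gcd(a,b)=1$. Let $N = 2^i$ and $M = 2^{i-1} = N/2$.

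The main step is the estimate
\[
\#\{(a,b) : 1\le a,b \le N,\ \gcd(a,b)=1\} = \frac{6}{\pi^2}N^2 + O(N\log N),
\]
which follows from $\sum_{d\le N}\mu(d)\lfloor N/d\rfloor^2 = N^2\sum_{d\le N}\mu(d)/d^2 + O\!\left(N\sum_{d\le N}1/d\right) = \frac{6}{\pi^2}N^2 + O(N\log N)$, using $\sum_{d>N}1/d^2 = O(1/N)$ and $\sum_{d\le N}1/d = O(\log N)$. Applying this with parameters $N$ and $M=N/2$ and subtracting, the number of coprime pairs $(a,b)$ with $M < \max(a,b)\le N$ is
\[
\frac{6}{\pi^2}\left(N^2 - M^2\right) + O(N\log N) = \frac{6}{\pi^2}\cdot\frac{3}{4}\,2^{2i} + O(i\,2^i) = \frac{9}{2\pi^2}\,2^{2i} + O(i\,2^i).
\]
Since $9/(2\pi^2) \approx 0.4559 > 1/4$, and the error term $O(i\,2^i)$ is negligible compared to $2^{2i}$ for $i$ large, this gives $|V_i| \ge \tfrac14 2^{2i}$ for all sufficiently large $i$. (The small discrepancy between ``$\max(a,b)\le N$'' and the exact region $[2^i]^2\setminus[2^{i-1}]^2$, coming from the coordinate shift by $1$ and from boundary lattice lines, changes $|V_i|$ by only $O(2^i)$, which is absorbed into the error term.)

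The only mild obstacle is bookkeeping: one must be slightly careful that the coordinate shift (visibility from $(1,1)$ versus from $(0,0)$) and the precise boundary of the half-open annulus $[2^i]^2\setminus[2^{i-1}]^2$ do not spoil the main term. Both effects perturb the count by $O(2^i)$, so since the gap between the true constant $9/(2\pi^2)$ and the claimed constant $1/4$ is bounded away from zero, the estimate is robust and the observation follows. No deep input is needed — just the standard Möbius/Legendre sieve with explicit error control.
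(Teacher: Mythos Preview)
Your argument is correct, but the paper's proof is much shorter and cruder. Rather than re-deriving the asymptotic for coprime pairs via the M\"obius sieve and subtracting the visible points in the smaller square, the paper simply quotes the classical fact that the number of visible points in $[2^i]^2$ is $\frac{6}{\pi^2}2^{2i}(1+o(1))$ and then subtracts the \emph{total} number of points in $[2^{i-1}]^2$, namely $\frac14 2^{2i}$. Since $\frac{6}{\pi^2}-\frac14\approx 0.358>\frac14$, the observation follows immediately. Your route yields the sharper constant $\frac{9}{2\pi^2}\approx 0.456$ and is self-contained (you do not need to appeal to an outside reference for the density of visible points), at the cost of redoing the standard sieve computation and tracking the boundary and coordinate-shift errors; the paper's route is a two-line argument that sacrifices the constant for brevity.
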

\begin{proof}
The number of visible points in $[2^i]^2$ is $\frac{6}{\pi^2} 2^{2i} (1+o(1))$ and the total number of points in $[2^{i-1}]$ is $\frac{1}{4} 2^{2i}$, implying this observation. 
\end{proof}

 Denote by $\mathcal{H}$ the rank $4$-graph with vertex set $[n]^2$, where the $3$-edges are the collinear triples and 4-edges are trapezoids. Then $g(n)=\alpha(\mathcal{H})$. We remark that Theorem~\ref{LPspar} cannot be applied directly to $\mathcal{H}$ because the codegree condition \eqref{codegreecon} on $\Delta_{2,4}$ is not satisfied. To circumvent this obstruction, we modify the hypergraph $\mathcal{H}$.

 \begin{defn}
 Let $s^*$ be a positive integer. Denote by $\mathcal{H}_{s^*}$ the rank $4$-graph with vertex set $[n]^2$ with edges: 
 \begin{itemize}
 \item $2$-sets of grid points lying on a line $\ell$ which satisfies $H(\ell)\leq s^*$,
 \item $3$-sets of grid points forming collinear triples,
 \item $4$-sets of grid points which form a trapezoid with the additional property that for each choice of two parallel lines $\ell_1,\ell_2$, going through two of the four points each, they satisfy $H(l_i)\geq s^*$. (Note that for a rhombus there are two ways of choosing two parallel lines.)
 \end{itemize}

\end{defn}
The following lemma provides several properties of the auxiliary hypergraph $\mathcal{H}_{s^*}$.
\begin{lemma}
\label{grid}
Let $\varepsilon>0$ and $s^*$ be a positive integer, $\Omega(n^{\varepsilon})= s^*= O(n^{1-\varepsilon})$. Then,

\vspace{0.3cm}
\begin{minipage}[t]{.49\textwidth}
    \begin{itemize}
\item[a)] $\Delta_2(\mathcal{H}_{s^*})=\Theta(n s^*)$,
\item[c)] $\Delta_4(\mathcal{H}_{s^*})=\Theta(n^4 \log n)$,
\item[e)] $\Delta_{3,4}(\mathcal{H}_{s^*})=O\left(\frac{n}{s^*}\right)$,
\end{itemize}
\end{minipage}
\begin{minipage}[t]{.49\textwidth}
    \begin{itemize}
\item[b)] $\Delta_3(\mathcal{H}_{s^*})=\Theta(n^2 \log n)$,
\item[d)] $\Delta_{2,3}(\mathcal{H}_{s^*})= O(n)$,
\item[f)] $\Delta_{2,4}(\mathcal{H}_{s^*})=O\left(\frac{n^3}{s^*}\right)$.
\end{itemize}
\end{minipage}
\end{lemma}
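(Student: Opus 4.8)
The plan is to prove all six estimates by direct counting, grouping lines (and directions) by the value $s=H(\ell)$ and using two structural facts about the grid together with elementary estimates on sums of Euler's totient $\phi$. Fact (i), already recorded in the Observations, is that a line $\ell$ through two grid points with $H(\ell)=s$ carries $O(n/s)$ grid points of $[n]^2$, and $\Theta(n/s)$ of them if $\ell$ passes near the centre of the grid. Fact (ii) is that for a fixed $s$ there are $\Theta(\phi(s))$ primitive directions $d$ with $H(d)=s$, and for such a $d$ the number of distinct lines of direction $d$ meeting $[n]^2$ in at least two points is $\Theta(ns)$; equivalently, since $d$ is primitive, two grid points lie on a common line of direction $d$ iff their difference is parallel to $d$, and the number of such (unordered) pairs is $\Theta(n^3/s)$ (sum $(n-ta)(n-tb)$ over $1\le t\le n/s$). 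The arithmetic inputs are $\sum_{s\le S}\phi(s)/s=\Theta(S)$, $\sum_{s\le S}\phi(s)/s^2=\Theta(\log S)$ and $\sum_{S\le s\le n}\phi(s)/s^2=\Theta(\log(n/S))$, which in the regime $\Omega(n^{\varepsilon})=s^{\ast}=O(n^{1-\varepsilon})$ are all of order $\log n$ when $S$ equals $s^{\ast}$ or $n$.

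With these in hand, (a), (b) and (d) are routine. For (a): the number of $2$-edges through a vertex $v$ is $\sum_{s\le s^{\ast}}(\#\{\text{directions with }H=s\})\cdot(\#\{\text{second points on the line}\})$, which by (i)–(ii) is at most $\sum_{s\le s^{\ast}}O(\phi(s))\cdot O(n/s)=O(ns^{\ast})$ for every $v$, and at least $\sum_{s\le s^{\ast}}\Omega(\phi(s))\cdot\Omega(n/s)=\Omega(ns^{\ast})$ for $v$ near the centre. For (b): the number of collinear triples through $v$ equals $\sum_{\ell\ni v}\binom{m_\ell-1}{2}$ where $m_\ell$ is the number of grid points on $\ell$; grouping the lines through $v$ by $s=H(\ell)$ and using (i)–(ii) gives $\sum_{s\le O(n)}O(\phi(s))\cdot O((n/s)^2)=O(n^2\log n)$ for every $v$ and $\sum_{s\le n/4}\Omega(\phi(s))\cdot\Omega((n/s)^2)=\Omega(n^2\log n)$ for $v$ central. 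For (d): a collinear triple through a fixed pair $\{u,v\}$ has its third point on the line $uv$, and that line has $O(n)$ grid points, so $\Delta_{2,3}=O(n)$.

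The heart of the matter is the three trapezoid estimates (c), (e), (f), where one must track the threshold $s^{\ast}$ carefully. Every $4$-edge $\{p_1,p_2,p_3,p_4\}$ comes with a perfect matching into two pairs spanning parallel lines $\ell_1\parallel\ell_2$, and being an edge forces $H(\ell_1)=H(\ell_2)\ge s^{\ast}$ (a parallelogram has two such matchings, so must satisfy this for both side-directions). For (f), fix $\{u,v\}$ and split the count of $4$-edges $\{u,v,x,y\}$ by which of the matchings $\{uv,xy\}$, $\{ux,vy\}$, $\{uy,vx\}$ is parallel. If $\{uv,xy\}$ is parallel, its direction is that of $uv$ and must have $H\ge s^{\ast}$, whence $\{x,y\}$ is a pair with difference parallel to this direction, of which there are $O(n^3/s)\le O(n^3/s^{\ast})$ by (ii). If $\{ux,vy\}$ is parallel, there are at most $n^2$ choices for $x$, its direction to $u$ must have $H\ge s^{\ast}$, and then $y$ lies on the line through $v$ in that direction, giving $O(n/s^{\ast})$ choices; the matching $\{uy,vx\}$ is symmetric. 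Summing gives $\Delta_{2,4}=O(n^3/s^{\ast})$. The same case split gives (e): for a fixed triple $\{u,v,w\}$ and a new point $z$, in each matching the relevant direction is forced and must have $H\ge s^{\ast}$, confining $z$ to one line with $O(n/s^{\ast})$ grid points, so $\Delta_{3,4}=O(n/s^{\ast})$. Finally (c): for $\Delta_4$ fix $v$ and designate the pair $\{v,w\}$ of the matching containing $v$; its direction $d$ has $H(d)=s\ge s^{\ast}$, there are $\Theta(n/s)$ choices of $w$ (for $v$ central), $\Theta(n^3/s)$ choices of the pair on the other parallel line, and $\Theta(\phi(s))$ directions with a given $H=s$, so summing over $s^{\ast}\le s\le O(n)$ yields $\sum_s\Theta(\phi(s))\cdot\Theta(n^4/s^2)=\Theta(n^4\log(n/s^{\ast}))=\Theta(n^4\log n)$; the upper bound runs identically for arbitrary $v$, with the bounded overcount from the choice of matching and from parallelograms (at most $O(n^4)$ through $v$, hence negligible) absorbed into the constant.

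The step I expect to be the main obstacle is fact (ii) — pinning down that a direction with $H=s$ supports $\Theta(ns)$ lines with two or more grid points, equivalently $\Theta(n^3/s)$ parallel pairs — together with the bookkeeping in (c), (e) and (f) that makes sure the $s^{\ast}$-threshold is used correctly and that degenerate configurations (parallelograms, rhombi, collinear quadruples) neither invalidate the matching arguments nor spoil the lower bound in (c); one uses here that a genuine (non-collinear) trapezoid has a unique side-direction unless it is a parallelogram. Everything else reduces to (i), (ii) and the totient sums above, with the hypotheses $\Omega(n^{\varepsilon})=s^{\ast}=O(n^{1-\varepsilon})$ invoked exactly to keep the relevant logarithmic sums of order $\log n$ and to keep the per-line counts $\Theta(n/s)$ genuinely growing over the summation ranges.
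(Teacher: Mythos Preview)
Your proposal is correct and follows essentially the same strategy as the paper: group lines by $s=H(\ell)$, use that a line with $H(\ell)=s$ meets the grid in $\Theta(n/s)$ points, and sum. The differences are in bookkeeping. Where you count primitive directions via $\phi(s)$ and invoke the totient sums $\sum_{s\le S}\phi(s)/s=\Theta(S)$ and $\sum\phi(s)/s^2=\Theta(\log S)$, the paper uses the crude upper bound of $4s$ directions with $H=s$ for the upper estimates and a dyadic decomposition over visible lattice points (Observation~2.1) for the lower estimates; both routes give the same orders. For part~(f) the paper is slicker than your case split: it simply observes $\Delta_{2,4}\le n^{2}\,\Delta_{3,4}$ and cites part~(e). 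Your more careful treatment of degeneracies (parallelograms, rhombi, collinear quadruples) in~(c) is not needed at this level of precision but does no harm.
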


\begin{proof}[Proof of Lemma~\ref{grid} a)] 
Let $i_0$ be sufficiently large such that Observation~\ref{vis} holds for $i\geq i_0$. Let $ i_0 \leq i\leq \lfloor \log(s^*) \rfloor$. Given a visible point $p\in V_i$, let $\ell_p$ be the line through the origin $o$ and $p$. The line $\ell_p$ contains $\Omega(n/2^{i}) $ grid points. For any grid point $q$, the pair $qo$ forms a 2-edge in $\mathcal{H}_{s^*}$ if $q$ is on a line $\ell_p$ for some visible point $p\in [s^*]^2$. Thus, 
$$ \Delta_2(\mathcal{H}_{s^*})\geq \deg_2(o) \geq  \sum_{i=i_0}^{\lfloor \log s^* \rfloor } |V_i| \ \Omega\left(\frac{n}{2^i}\right) 
.$$
By Observation~\ref{vis}, $|V_i|\geq 2^{2i-2}$ and therefore $\Delta_2(\mathcal{H}_{s^*})=\Omega(n s^*)$.

For the upper bound, let $p\in [n]^2$ be an arbitrary grid point. Given $s\leq s^*$, there are at most $4s$ choices for slopes of lines $\ell$ passing through $p$ and satisfying $H(\ell)=s$. On each such line there are at most $O(n/s)$ grid points. Thus, 
$$
\deg_2(p) \leq \sum_{s=1}^{ s^*   } 4s \ O\left(\frac{n}{s}\right)=O(ns^*).
$$
Since the grid point $p$ was chosen arbitrarily, $\Delta_2(\mathcal{H}_{s^*})=O(n s^*)$.
\end{proof}

\begin{proof}[Proof of Lemma~\ref{grid} b)]
We will provide a lower bound on $\Delta_3(\mathcal{H}_{s^*})$ by counting the number of collinear triples containing the origin $o$.
Let $i_0$ be sufficiently large such that Observation~\ref{vis} holds for $i\geq i_0$. Let $ i_0 \leq i\leq \lfloor \log(n) \rfloor$. Given a visible point $p\in V_i$, let $\ell_p$ be the line through the origin $o$ and $p$. The line $\ell_p$ contains $\Omega(n/2^{i}) $ grid points. For grid points $q$ and $q'$ a $3$-set $qq'o$ forms a 3-edge in $\mathcal{H}_{s^*}$ iff $q$ and $q'$ are on a line $\ell_p$ for some visible point $p\in [n]^2$,
we obtain 
$$ \Delta_3(\mathcal{H}_{s^*})\geq \deg_3(o) \geq  \sum_{i=i_0}^{\lfloor \log n \rfloor } |V_i| \ \Omega \left(\binom{\frac{n}{2^{i}}}{2} \right).$$
Using Observation~\ref{vis}, we get $ \Delta_3(\mathcal{H}_{s^*})=\Omega(n^2 \log n)$. 

For the upper bound on $\Delta_3(\mathcal{H}_{s^*})$, let $p\in [n]^2$ be an arbitrary grid point. Given $s\leq s^*$, there are at most $4s$ choices for slopes of lines $\ell$ passing through $p$ and satisfying $H(\ell)=s$. On each such line there are at most $O(n/s)$ grid points. Thus, 
$$
\deg_3(p) \leq \sum_{s=1}^{ s^*  } 4s \ O\left(\left(\frac{n}{s}\right)^2\right)=O(n^2 \log n).
$$
Since $p$ was chosen arbitrarily from $[n]^2$, $\Delta_3(\mathcal{H}_{s^*})= O(n^2 \log n)$. 
\end{proof}

\begin{proof}[Proof of Lemma~\ref{grid} c)]
For the lower bound on $\Delta_4(\mathcal{H}_{s^*})$, we count the number of $4$-edges containing the origin. Let $i\in \mathbb{N}$ satisfying that $ \log s^* \leq i\leq  \log n $. Denote by $T_i$ the number of trapezoids containing the origin $o$ and exactly one pair of parallel sides (i.e. rhombi are not being counted) such that the line through the origin whose slope appears twice in the trapezoid contains a visible point $p\in V_i$. Fixing a point $p\in V_i$, there are $\Omega(n/2^i)$ points on the line $\ell_p$ through the origin $o$ and $p$. For the third point of the trapezoid we choose an arbitrary grid point from $\{(p_1,p_2): n/4\leq p_i \leq 3n/4\}$ which is not on the line $\ell_p$. There are $\Omega(n/2^i)$ ways to choose a forth point to complete a trapezoid, and thus in total
$$
\Delta_4(\mathcal{H}_{s^*})\geq \deg_4(o)\geq \sum_{i= \lceil \log s^* \rceil }^{\lfloor \log n \rfloor}  |V_i| \ \Omega\left( \frac{n}{2^i} \cdot n^2 \cdot \frac{n}{2^i} \right).
$$
Using Observation~\ref{vis}, we get $ \Delta_4(\mathcal{H}_{s^*})=\Omega(n^4 \log n)$.

Now, let $p\in [n]^2$ be an arbitrary point. We will count the number of trapezoids containing $p$. For a given positive integer $s$, let $T_s$ be the number of trapezoids containing $p$ and two parallel sides on lines $\ell_1,\ell_2$ satisfying $H(\ell_1)=H(\ell_2)=s$. Then, clearly
$$
\deg_4(p)\leq \sum_{s=1}^n T_s.
$$
There are at most $4s$ lines $\ell$ through $p$ satisfying $H(\ell)=s$. For each of them, there are at most $O(n/s)$ grid points on it. For the third point of a trapezoid there are at most $n^2$ choices trivially. The last point of the trapezoid needs to be on a line $\ell'$ through the third point and parallel to $\ell$, that are at most $O(n/s)$ choices for such a point. Thus, 
$$T_s\leq O\left( 4s \cdot \frac{n}{s} \cdot n^2 \cdot \frac{n}{s}\right)= O\left(\frac{n^4}{s}\right)$$
which implies
$$
\deg_4(p)\leq \sum_{s=1}^n T_s  =O(n^4 \log n).
$$
Since $p$ was arbitrary, we conclude $\Delta_{4}(\mathcal{H}_{s^*})= O(n^4 \log n)$.
\end{proof}

\begin{proof}[Proof of Lemma~\ref{grid} d)]

Since any line in $\mathbb{R}^2$ contains at most $n$ grid points, we immediately obtain $\Delta_{2,3}(\mathcal{H}_{s^*})= O(n)$. 
\end{proof}

\begin{proof}[Proof of Lemma~\ref{grid} e)]

Let $A=\{p_1,p_2,p_3\}$ be a $3$-element subset of the grid. If the points $p_1,p_2,p_3$ form a collinear triple, then they cannot be contained in a trapezoid and thus $\deg_4(A)=0$. Otherwise, for $1\leq i < j \leq 3$, let $\ell_{i,j}$ be the line through $p_i$ and $p_j$. A grid point $p$ forming a trapezoid with the points from $A$, must lie on the line thorough $p_m$ parallel to $\ell_{i,j}$ for some choice $\{i,j,m\}=[3]$. For any such point $p$, the set $\{p,p_1,p_2,p_3\}$ is an edge in $\mathcal{H}_{s^*}$, if additionally $H(\ell_{i,j})\geq s^*$. Thus, there are at most $O(n/s^*)$ choices for $p$. We conclude $\Delta_{3,4}(\mathcal{H}_{s^*})=O(n/s^*)$.
\end{proof}

\begin{proof}[Proof of Lemma~\ref{grid} f)]
By Lemma~\ref{grid} e),
\begin{equation*}\Delta_{2,4}(\mathcal{H}_{s^*}) \leq n^2 \Delta_{3,4}(\mathcal{H}_{s^*})=O\left(\frac{n^3}{s^*}\right). \qedhere
\end{equation*}
\end{proof}

\subsection{Proof of Theorem~\ref{pointmain}}
\label{prooflb}
Let $s^*=\lceil n^{1/3} \rceil$ and $f=\log^{1/2}n$. Then, by Lemma~\ref{grid}
\begin{gather*}
 \gamma:=\max_{2\leq\ell\leq 4}\Delta_{\ell}(\mathcal{H}_{s^*})^{1/(\ell-1)}= \Theta(\max \{ n^{4/3},n \log^{1/2}n, n^{4/3} \log^{1/3}n  \})
 =\Theta(n^{4/3} \log^{1/3}n).
\end{gather*}
Further,
\begin{gather*}
\Delta_{2,3}(\mathcal{H}_{s^*})= O(n), \quad \quad
\Delta_{3,4}(\mathcal{H}_{s^*})= O(n^{2/3}), \quad \quad \text{and} \quad \quad
\Delta_{2,4}(\mathcal{H}_{s^*})= O(n^{8/3}).
\end{gather*}
Therefore,
\begin{gather*}
\Delta_{2,3}(\mathcal{H}_{s^*})= O(n) \leq  \Theta\left( \frac{n^{4/3} \log^{1/3}n}{f} \right)  = \frac{\gamma}{f},\\
\Delta_{3,4}(\mathcal{H}_{s^*})=O(n^{2/3}) \leq \Theta \left( \frac{n^{4/3} \log^{1/3}n}{f} \right) = \frac{\gamma}{f},\\
\Delta_{2,4}(\mathcal{H}_{s^*})=  O(n^{8/3}) \leq \Theta\left(\frac{n^{8/3} \log^{2/3}n}{f} \right)  = \frac{\gamma^{2}}{f}.
\end{gather*}
We conclude that the conditions \eqref{codegreecon} are satisfied. Further, for the graph triangle $K_3$, by Lemma~\ref{grid} 
$$\Delta_{K_3}(\mathcal{H}_{s^*}) \leq \Delta_{2}(\mathcal{H}_{s^*})^2 = O(n^{8/3}) \leq \Theta\left(\frac{n^{8/3} \log^{2/3}n}{f} \right)  = \frac{\gamma^{2}}{f}.
$$
We conclude that \eqref{codegreeconK3} is satisfied. Note that
$$
\max \left\{\left(  \frac{\Delta_{\ell}}{\log \log n} \right)^{\frac{1}{\ell-1}} \right\}=  \max \left\{\left(  \frac{n^{4/3}}{\log \log n} \right), \left(  \frac{n^2 \log n}{\log \log n} \right)^{\frac{1}{2}} , \left(  \frac{n^4 \log n}{\log \log n} \right)^{\frac{1}{3}} \right\}= \Theta \left(  \frac{n^{4/3} \log^{1/3} n}{(\log \log  n)^{1/3}} \right).
$$
Therefore, by Theorem~\ref{LPspar},
$$
\chi(\mathcal{H}_{s^*})= O\left(\max_{2\leq \ell \leq 4} \left\{\left(  \frac{\Delta_\ell}{\log f} \right)^{\frac{1}{\ell-1}} \right\}\right)=O \left(  \frac{n^{4/3} \log^{1/3} n}{(\log \log  n)^{1/3}} \right).
$$
Using that every independent set in $\mathcal{H}_{s^*}$ is an independent set in $\mathcal{H}$, we conclude
$$
g(n)=\alpha(\mathcal{H})\geq \alpha(\mathcal{H}_{s^*})=\Omega \left(  \frac{n^{2/3} (\log \log  n)^{1/3}}{ \log^{1/3}n} \right).
$$
This completes the proof of Theorem~\ref{pointmain}.

\section{\texorpdfstring{Application 2: The Tur\'an density of $H_3^r$}{TEXT}}
\label{Turan}
In this section we prove Theorem~\ref{greedyK3} using Theorem~\ref{Mub}. 
\subsection{The blow-up operation}
\label{sec: 1}
Let $G$ be an $r$-graph. The \emph{blow-up} $G(t)$ of $G$ is the $r$-graph which is obtained from $G$ by replacing each vertex $x\in V(G)$ by $t$ vertices $x^1,\ldots,x^t$ and each edge $x_1 \cdots x_r\in E(G)$ by $t^r$ edges $x_1^{a_1}\cdots x_r^{a_r}$ with $1 \leq  a_1,\ldots, a_r \leq t$. We say an $r$-graph $H$ has \emph{complete shadow} if every pair of vertices is contained in an edge. The following two observations about blow-ups are essential. 

\begin{lemma}
\label{blowHfree}
Let $m\in \mathbb{N}$ and $H$ be an $r$-graph with complete shadow. If $G$ is an $H$-free $r$-graph, then $G(m)$ is $H$-free.   
\end{lemma}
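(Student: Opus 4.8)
The plan is to prove the contrapositive: assuming the blow-up $G(m)$ contains a copy of $H$, I would exhibit a copy of $H$ inside $G$ itself, contradicting $H$-freeness of $G$.

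First I would fix notation. Let $\phi\colon V(H)\to V(G(m))$ be an embedding, i.e.\ an injection with $\phi(e)\in E(G(m))$ for every $e\in E(H)$. Every vertex of $G(m)$ has the form $x^a$ for a unique $x\in V(G)$ and $a\in\{1,\dots,m\}$; call $x$ the \emph{projection} of $x^a$ and write $\pi(x^a)=x$, extended to a map $\pi\colon V(G(m))\to V(G)$. By the definition of the blow-up, an edge of $G(m)$ arises from an edge $x_1\cdots x_r\in E(G)$ by choosing one clone from each of the $r$ \emph{distinct} vertices $x_1,\dots,x_r$; hence if $\{x_1^{a_1},\dots,x_r^{a_r}\}\in E(G(m))$ then $x_1,\dots,x_r$ are pairwise distinct and $\{x_1,\dots,x_r\}\in E(G)$. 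In particular, the $r$ vertices of any edge of $G(m)$ have pairwise distinct projections.

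The key step is to show that $\psi:=\pi\circ\phi\colon V(H)\to V(G)$ is again an embedding, and this is exactly where the complete-shadow hypothesis is used. For injectivity, take distinct $u,v\in V(H)$; since $H$ has complete shadow there is an edge $e\in E(H)$ with $\{u,v\}\subseteq e$, so $\{\phi(u),\phi(v)\}\subseteq\phi(e)\in E(G(m))$, and by the observation above $\pi(\phi(u))\neq\pi(\phi(v))$. Thus $\psi$ is injective. For edge preservation, if $e\in E(H)$ then $\phi(e)\in E(G(m))$, hence $\psi(e)=\pi(\phi(e))\in E(G)$. Therefore $G$ contains a copy of $H$, the desired contradiction.

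I do not expect a genuine obstacle here: the only delicate point is the injectivity of $\pi\circ\phi$, and that is precisely the content of the complete-shadow assumption — without it a copy of $H$ in $G(m)$ could place two non-adjacent vertices of $H$ into the same blob and fail to descend to $G$ (indeed the statement is false in general without this hypothesis). I would also make sure the notion of ``$H$-free'' in force is the standard not-necessarily-induced one, since the argument produces an edge-preserving injection rather than an induced embedding.
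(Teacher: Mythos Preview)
Your proof is correct and follows essentially the same approach as the paper: assume a copy of $H$ in $G(m)$, project to $G$, and use the complete-shadow hypothesis to ensure the projection is injective (since any two vertices of $H$ lie in a common edge, whose image in $G(m)$ must have pairwise distinct projections), yielding a copy of $H$ in $G$. Your write-up is more detailed than the paper's, but the idea is identical.
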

\begin{proof}
Towards contradiction, assume that $G(m)$ contains a copy of $H$ on some vertex set \\
$U\subseteq V(G(m))$. Then, because $H$ has complete shadow, for any two vertices $x_1^{i},x_2^j\in U$ we have $x_1\neq x_2$. Therefore $G$ contains a copy of $H$ too, a contradiction. 
\end{proof}
\begin{lemma}
\label{blowHdensity}
Let $G$ be an $r$-graph with $m$ vertices, where $m\geq r^2$. Let $n$ be an integer divisible by $m$, and let $t=n/m$. Then $$
\frac{|E(G(t))|}{ \binom{n}{r}} \geq \frac{1}{4} \frac{|E(G)|}{ \binom{m}{r}}. $$
\end{lemma}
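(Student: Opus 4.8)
The plan is to reduce the statement to a clean inequality between binomial coefficients and then verify that inequality by an elementary term‑by‑term comparison, with the hypothesis $m\ge r^2$ entering only at the very end. First I would count the edges of the blow‑up exactly: each edge $x_1\cdots x_r$ of $G$ gives rise in $G(t)$ to the $t^r$ sets $\{x_1^{a_1},\dots,x_r^{a_r}\}$ with $(a_1,\dots,a_r)\in[t]^r$, and these are genuine, pairwise distinct $r$‑sets (the $x_i$ are distinct, so different tuples give different sets), while edges of $G(t)$ coming from distinct edges of $G$ are distinct because projecting $x^a\mapsto x$ recovers the underlying edge of $G$. Hence $|E(G(t))|=t^r|E(G)|$. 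Since $n=tm$ and $t\ge 1$, and assuming $|E(G)|>0$ (the case $|E(G)|=0$ being trivial), the asserted inequality is equivalent to
\[
\binom{tm}{r}\ \le\ 4\,t^r\binom{m}{r}.
\]

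Next I would write both sides as products of $r$ terms over $r!$: namely $t^r\binom{m}{r}=\frac1{r!}\prod_{j=0}^{r-1}(tm-tj)$ and $\binom{tm}{r}=\frac1{r!}\prod_{j=0}^{r-1}(tm-j)$. So it suffices to bound the ratio
\[
\prod_{j=0}^{r-1}\frac{tm-j}{tm-tj}=\prod_{j=1}^{r-1}\frac{tm-j}{tm-tj}\ \le\ 4,
\]
the $j=0$ factor being $1$. For $1\le j\le r-1$, using $t\ge 1$ one gets $\frac{tm-j}{tm-tj}=1+\frac{(t-1)j}{t(m-j)}\le 1+\frac{j}{m-j}=\frac{m}{m-j}$, so the ratio is at most $\prod_{j=1}^{r-1}\frac{m}{m-j}$, a quantity independent of $t$.

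Finally I would invoke $m\ge r^2$. Using $-\ln(1-x)\le x/(1-x)$ for $0\le x<1$,
\[
\ln\prod_{j=1}^{r-1}\frac{m}{m-j}=-\sum_{j=1}^{r-1}\ln\!\Big(1-\frac jm\Big)\le\sum_{j=1}^{r-1}\frac{j/m}{1-j/m}.
\]
Since $j\le r-1$ and $m\ge r^2$ we have $j/m\le (r-1)/r^2\le \tfrac12$, hence $\frac{j/m}{1-j/m}\le \frac{2j}{m}$, and the sum is at most $\frac2m\sum_{j=1}^{r-1}j=\frac{r(r-1)}{m}\le 1$. Therefore $\prod_{j=1}^{r-1}\frac{m}{m-j}\le e<4$, which finishes the proof.

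I do not expect a genuinely hard step here: the lemma is a routine estimate. The only point requiring care is that passing to the blow‑up dilutes the edge density by exactly the factor $t^r\binom{m}{r}/\binom{tm}{r}$, and the hypothesis $m\ge r^2$ is precisely what is needed to keep this factor above $1/e>1/4$ uniformly in $t$; a weaker assumption on $m$ would force a worse constant than $1/4$.
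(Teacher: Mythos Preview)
Your proof is correct and follows essentially the same route as the paper: both start from $|E(G(t))|=t^r|E(G)|$, reduce the claim to bounding the product $\prod_{j=0}^{r-1}(1-j/m)$ from below (equivalently, its reciprocal from above), and invoke $m\ge r^2$ only at the end. The paper's bound is slightly more direct: it uses $\prod_{j=0}^{r-1}(1-j/m)\ge(1-r/m)^r\ge(1-1/r)^r\ge 1/4$, avoiding the logarithmic inequality you use; your route in fact yields the marginally sharper constant $1/e$ in place of $1/4$.
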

\begin{proof}
We have
\begin{align*}
\hspace{1cm}
\frac{|E(G(t))|}{\binom{n}{r}} &= \frac{|E(G)| t^r}{ \binom{n}{r}}=  \frac{|E(G)|t^r}{\binom{m}{r}\frac{n!(m-r)!}{m!(n-r)!}}= \frac{|E(G)|}{\binom{m}{r}} \frac{m!(n-r)!}{n!(m-r)!} \frac{n^r}{m^r}\geq \frac{|E(G)|}{\binom{m}{r}} \frac{m!}{(m-r)!m^r}\\
&\geq \frac{|E(G)|}{\binom{m}{r}} \left(1-\frac{r}{m}\right)^r \geq \frac{|E(G)|}{\binom{m}{r}} \left(1-\frac{1}{r}\right)^r\geq \frac{1}{4}  \frac{|E(G)|}{\binom{m}{r}}.
\hspace{4.65cm}
\qedhere
\end{align*}
\end{proof}

Informally, the takeaway of Lemmas~\ref{blowHfree} and \ref{blowHdensity} is that in order to understand the asymptotic behavior of Tur\'an density of $H_3^r$ it suffices to consider host graphs with roughly $r^2$ vertices.

\subsection{Proof of Theorem~\ref{greedyK3}}
Let $G$ be the auxiliary $3$-graph with vertex set $V=\binom{[r^2]}{r}$ and edges $A\in E(G)$ iff $A$ forms a copy of $H_3^r$ in $K_{r^2}^r$. Note that 
\begin{align*}
\Delta=\Delta(G)= \binom{r}{r-2}(r^2-r),
\end{align*}
because simply every edge in $K_{r^2}^r$ is contained in this many copies of $H_3^r$. Therefore, $r^4/3 \leq \Delta \leq r^{4}$ for $r$ sufficiently large. Set $f=\Delta^{1/10}$ and let $A=\{e_1,e_2\}\subseteq V(G)$ of size $2$. We will upper bound $\deg_3(A)$. If $|e_1\cap e_2|<r-1$, then $\deg_3(A)=0$. If  $|e_1\cap e_2|=r-1$, then any $f\in V(G)$ satisfying that $e_1,e_2,f$ form a copy of $H_3^r$ has the property that $f\subseteq e_1\cup e_2$. Since $f\neq e_1$ and $f\neq e_2$, we have 
$$
\deg_3(A)\leq \binom{|e_1\cup e_2|}{r}-2 = \binom{r+1}{r}-2 =r-1.
$$
We conclude $\Delta_{2,3}(G)\leq r-1$ and thus condition \eqref{condMub} holds for $G$:
\begin{align*}
\Delta_{2,3}(G) \leq r-1 \leq \left(\frac{r^4}{3}\right)^{\frac{1}{3}}\leq \Delta^{\frac{1}{3}}\leq \frac{\Delta^{\frac{1}{2}}}{\Delta^{\frac{1}{10}}}=\frac{\Delta^{\frac{1}{2}}}{f}.
\end{align*}
By Theorem~\ref{Mub} the $3$-graph $G$ satisfies 
$$\chi(G)=O\left(\left(\frac{\Delta}{\log  \Delta}\right)^{\frac{1}{2}} \right)= O\left(\frac{r^2}{(\log r)^{\
\frac{1}{2}}}\right).$$
Therefore $G$ contains an independent set of size at least 
$$\alpha(G)=\Omega\left(\binom{r^2}{r}\frac{(\log r)^{\
\frac{1}{2}}}{r^2} \right).$$
This independent set corresponds to an $H_3^r$-free $r$-graph $H$ on $r^2$ vertices with $\alpha(G)$ many edges.  Let $t=n/r^2$. By considering the blow-up $H(t)$ of this $r$-graph and Lemmas~\ref{blowHfree} and \ref{blowHdensity} we obtain 
\begin{equation*}
\pi(H_3^r)\geq \limsup_{n \rightarrow \infty} \frac{|E(H(t))|}{ \binom{n}{r}} \geq \frac{1}{4} \frac{|E(H)|}{ \binom{r^2}{r}}=\Omega\left(\frac{(\log r)^{\
\frac{1}{2}}}{r^2} \right),
 \end{equation*}
completing the proof of Theorem~\ref{greedyK3}.

\section{Acknowledgements}
The author thanks Maria Axenovich, J\'ozsef Balogh, Alberto Espuny D\'iaz, Peter Kaiser, Lina Li, Dingyuan Liu, Let\'icia Mattos, Ethan Patrick White for valuable discussions on the topic of this paper. Special thanks are extended to Hong Liu and Zixiang Xu;
Hong Liu for organizing the 1st ECOPRO combinatorial week workshop in Daejeon, and Zixiang Xu for presenting the distinct slopes problem.

\begin{bibdiv} 
\begin{biblist} 

\bib{ajtai}{article}{
  title={A note on Ramsey numbers},
  author={Ajtai, Mikl{\'o}s},
  author={Koml{\'o}s, J{\'a}nos},
  author={Szemer{\'e}di, Endre},
  journal={Journal of Combinatorial Theory, Series A},
  volume={29},
  number={3},
  pages={354--360},
  year={1980},
  publisher={Elsevier}
}

\bib{BLM}{article}{
  title={Daisies and other Tur{\'a}n problems},
  author={Bollob{\'a}s, B{\'e}la},
  author={Leader, Imre},
  author={Malvenuto, Claudia},
  journal={Combinatorics, Probability and Computing},
  volume={20},
  number={5},
  pages={743--747},
  year={2011},
  publisher={Cambridge University Press}
}

\bib{Campos}{article}{
  title={A new lower bound for sphere packing},
  author={Campos, Marcelo},
  author={Jenssen, Matthew},
  author={Michelen, Marcus},
  author={Sahasrabudhe, Julian},
  journal={arXiv preprint arXiv:2312.10026},
  year={2023}
}

\bib{MJ}{article}{
  title={Coloring sparse hypergraphs},
  author={Cooper, Jeff},
  author={Mubayi, Dhruv},
  journal={SIAM Journal on Discrete Mathematics},
  year={2016},
  pages={1165-1180}
}

\bib{CM}{article}{
  title={List coloring triangle-free hypergraphs},
  author={Cooper, Jeff},
  author={Mubayi, Dhruv},
  journal={Random Structures \& Algorithms},
  volume={47},
  number={3},
  pages={487--519},
  year={2015},
  publisher={Wiley Online Library}
}

\bib{EGRT}{article}{
  title={Bounds for arrays of dots with distinct slopes or lengths},
  author={Erd{\H{o}}s, Paul},
  author={ Graham, Ron},
  author={Ruzsa, Imre Z},
  author={Taylor, Herbert},
  journal={Combinatorica},
  volume={12},
  number={1},
  pages={39--44},
  year={1992},
  publisher={Springer}
}

\bib{MR15796}{article}{
    AUTHOR = {Erd{\H{o}}s, P.},
     TITLE = {On sets of distances of {$n$} points},
   JOURNAL = {Amer. Math. Monthly},
    VOLUME = {53},
      YEAR = {1946},
     PAGES = {248--250},
      ISSN = {0002-9890}
}

\bib{Fer}{article}{,
		AUTHOR = {Irene Gil Fern\'andez},
		author={Jaehoon Kim},
		AUTHOR = {Hong Liu},
		author={Oleg Pikhurko},
		TITLE = {New lower bound on ball packing density in high-dimensional hyperbolic spaces},
  journal={arXiv preprint arXiv:2405.07818},
  year={2024}

}

\bib{FF}{article}{,
		AUTHOR = {Frankl, P.},
		author={F\"{u}redi, Z.},
		TITLE = {An exact result for {$3$}-graphs},
		JOURNAL = {Discrete Math.},
		YEAR = {1984},
		NUMBER = {2-3},
		PAGES = {323--328},
		ISSN = {0012-365X},
}

\bib{FM}{article}{
  title={On the chromatic number of simple triangle-free triple systems},
  author={Frieze, Alan},
  author={Mubayi, Dhruv},
  journal={The Electronic Journal of Combinatorics},
  volume={15},
  number={1},
  year={2008}
}

\bib{Furedi}{article}{,
  title={Maximal independent subsets in Steiner systems and in planar sets},
  author={F{\"u}red, Zolt{\'a}n},
  journal={SIAM Journal on Discrete Mathematics},
  volume={4},
  number={2},
  pages={196--199},
  year={1991},
  publisher={SIAM}
}

\bib{gunderson2022tur}{article}{
  title={Tur\'an numbers and switching},
  author={Gunderson, Karen},
  author={Semeraro, Jason},
  journal={arXiv preprint arXiv:2204.10775},
  year={2022}
}

\bib{gund2}{article}{
  title={Tournaments, 4-uniform hypergraphs, and an exact extremal result},
  author={Gunderson, Karen},
  author={Semeraro, Jason},
  journal={Journal of Combinatorial Theory, Series B},
  volume={126},
  pages={114--136},
  year={2017},
  publisher={Elsevier}
}

\bib{herzog}{article}{
     AUTHOR = {Herzog, F.},
 author = {Stewart, B. M. },
 journal = {The American Mathematical Monthly},
 number = {5},
 pages = {487--496},
 publisher = {Mathematical Association of America},
 title = {Patterns of Visible and Nonvisible Lattice Points},
 volume = {78},
 year = {1971}
}

\bib{johansson}{article}{
  title={Asymptotic choice number for triangle free graphs},
  author={Johansson, Anders},
  year={1996},
  institution={Technical report 91-5, DIMACS}
}

\bib{Krivelevich}{article}{
  title={A lower bound on the density of sphere packings via graph theory},
  author={Krivelevich, Michael},
  author={Litsyn, Simon},
  author={Vardy, Alexander},
  journal={International Mathematics Research Notices},
  volume={2004},
  number={43},
  pages={2271--2279},
  year={2004},
  publisher={OUP}
}

\bib{linali}{article}{
  title={The chromatic number of triangle-free hypergraphs},
  author={Li, Lina},
  author={Postle, Luke},
  journal={arXiv preprint arXiv:2202.02839},
  year={2022}
}

\bib{Rademacher}{article}{
  title={Lectures on Elementary Number Theory},
  author={Rademacher, Hans},
  year={1964}
}

\bib{shearer}{article}{
  title={A note on the independence number of triangle-free graphs},
  author={Shearer, James B},
  journal={Discrete Mathematics},
  volume={46},
  number={1},
  pages={83--87},
  year={1983},
  publisher={Elsevier}
}

\bib{sid}{article}{
  title={Tur\'an numbers of $ r $-graphs on $ r+ 1$ vertices},
  author={Sidorenko, Alexander},
  journal={arXiv preprint arXiv:2205.02006},
  year={2022}
}

\bib{zhang}{article}{
  title={A note on arrays of dots with distinct slopes},
  author={Zhang, Zhen},
  journal={Combinatorica},
  volume={13},
  number={1},
  pages={127--128},
  year={1993},
  publisher={Springer}
}

\end{biblist} 
\end{bibdiv} 

\noindent

\end{document}